\DeclareMathOperator*{\rank}{rank}
\newcommand\oprocendsymbol{\hbox{$\triangle$}}
\newcommand\oprocend{\relax\ifmmode\else\unskip\hfill\fi\oprocendsymbol}
\DeclareSymbolFont{bbold}{U}{bbold}{m}{n}
\DeclareSymbolFontAlphabet{\mathbbold}{bbold}
\newtheorem{theorem}{Theorem}
\newtheorem{remark}[theorem]{Remark}
\newtheorem{example}[theorem]{Example}
\newtheorem{lemma}[theorem]{Lemma}
\newtheorem{definition}[theorem]{Definition}
\newtheorem{proposition}[theorem]{Proposition}
\newtheorem*{problem*}{Problem}
\newcommand {\be}{\begin{equation}}
\newcommand {\ee}{\end{equation}}
\newcommand{\im}{{\rm Im}}
\title{\LARGE\bf  Data-Driven Reduced-Order Unknown-Input Observers}
\author{Giorgia Disar\`o and Maria Elena Valcher
\thanks{G. Disar\`o and  M.E. Valcher are with the Dipartimento di Ingegneria dell'Informazione,
 Universit\`a di Padova,
    via Gradenigo 6B, 35131 Padova, Italy, e-mail:  \texttt{giorgia.disaro@phd.unipd.it, meme@dei.unipd.it}}
   }
  \date{}
\begin{document}
\maketitle
\begin{abstract}
In this paper we propose a data-driven approach to the design of  reduced-order unknown-input observers (rUIOs).
We first   recall the model-based solution, by assuming  a problem set-up slightly different from those traditionally adopted in the literature, in order to be able to easily adapt it to the data-driven scenario. 
Necessary and sufficient conditions for the existence of a reduced-order  unknown-input observer, whose matrices can be derived from a sufficiently rich set of collected historical data, are first derived and then proved   to be equivalent to  the ones obtained in the model-based framework. Finally, a numerical example is presented, to validate the effectiveness of the proposed scheme. \end{abstract}

\section{Introduction} \label{intro}
The problem of estimating the state of a dynamical system is of paramount importance in a large number of control applications, first of all state feedback stabilization. Indeed, the state of a system is very often not accessible and hence one needs to estimate it from input and output measurements. However, in a lot of practical situations  one cannot even assume 
 to perfectly know all the inputs that are acting on the system, because the process under consideration is subject to disturbances or actuator faults. Therefore, since the seventies, the control community has  put lots of efforts in finding solutions to the state estimation problem in the presence of unknown inputs acting on the system. Several methods have been employed, ranging from algebraic \cite{Hou-Muller1, Kudva, doppioWang} and geometric \cite{Bhatta} methods to generalized inverse approaches \cite{Kurek,Miller}. 
The majority of the existing solutions requires the perfect knowledge of the system, namely that the matrices involved in the process description are available. However, in practical situations this is not always the case and very often one has to deal with black box models, relying only on the information provided by the inputs and the outputs of the system. On the other hand, nowadays, in the big data era, large amounts of data can be collected and used to get insights into the process that has generated them. 
Hence data-driven techniques in the field of control theory have gained increasing attention. 
Data-driven methods have been proposed, in particular,
  to tackle the state estimation problem \cite{Kaynak_Automatica2023,DingIFAC2011,Mishra22}, and more specifically the unknown-input state estimation problem. In particular, in \cite{Ferrari-Trecate}, a novel data-driven unknown-input observer (UIO), based on behavioral system theory and the result known as Fundamental Lemma  proposed by Jan Willems and coworkers \cite{WillemsPE}, has been proposed. Necessary and sufficient conditions on the data collected from the system for  the existence of a UIO that makes the state estimation error converge asymptotically to zero, regardless of the unknown inputs, have been derived. 
  In \cite{TAC-UIO} the design of full-order UIOs has been further explored, by providing weaker conditions for problem solvability, and a complete parametrization of the UIOs one can derive from a given set of historical data. Moreover, it has been shown that the data-driven approach provides a problem solution under the same conditions under which a UIO can be derived 
  from the complete knowledge of the system matrices.
  The   algorithms  proposed in \cite{TAC-UIO,Ferrari-Trecate} are purely data-driven, namely they do not require any preliminary identification step. Indeed,   there are mainly two approaches to exploit data. One approach first identifies the original model, based on the collected data, and then applies  standard model-based techniques to the estimated system. The second one, instead, does not require any preliminary identification phase, and designs the UIO directly from the data. 

When the system whose state we want to estimate is extremely complex, 
implementing a full order UIO may be particularly demanding.
Indeed, 
reduced-order observers have   been widely investigated, from a model-based perspective,  due to their parsimonious nature that is always a desirable characteristic in engineering applications. In \cite{Darouach3,Guan}, a reduced-order unknown-input state estimator  (whose dimension is equal to the difference between the dimension of the state and the dimension of the unknown input) has been proposed, by first eliminating the effect of the unknown input on part of the state variables, and then designing a conventional Luenberger observer for the subsystem driven by known inputs only. A uniform design procedure for constructing reduced-order   unknown-input observers   (rUIOs) of order either equal to the difference between  the dimension of the state and the dimension of the unknown input, or to the difference between the dimension of the state and the dimension of the output, has been proposed in \cite{Hou2}. The second type of reduced-order unknown-input observers has been investigated also in \cite{Olfa, Kudva}. However, to the best of the authors' knowledge, rUIOs  have never been  addressed from a data-driven perspective.  

In this paper 
we propose a data-driven approach to the design of reduced-order unknown-input observers, by adopting
a hybrid solution, since we first identify from data   the output matrix of the data-generating system and then we leverage solely the collected data to design the rUIO. The result is not only an algorithm for state estimation of lower complexity with respect to the full-order ones, but also a less demanding procedure to generate the observer matrices from the collected data, due to their lower dimensions.

The results proposed in this paper clearly bear  similarities  with those derived in \cite{TAC-UIO,Ferrari-Trecate}
where a  data-driven approach to the design of full-order UIOs is proposed.
However, adapting the traditional model-based methods for   rUIO design to the data-driven context is not immediate.
Indeed, classic model-based techniques have either introduced the restrictive hypothesis that the output variables are   a subset of the state variables (see, e.g., \cite{Kudva}), or have resorted to  a generic change of basis in the state space  \cite{Hou2}, which would 
be difficult to extend  to the  data-driven approach.
So, the first step has been to revise the model-based solution to the problem, in such a way that its extension to, and comparison with, the one we propose based on collected data is possible.
The necessary and sufficient conditions for the existence of a reduced-order unknown-input observer   provided,  e.g., in \cite{Hou2,Kudva}  hold also in our setting, but need to be particularized to our specific description of the system. 
Based on them,  we propose a data-driven algorithm to solve the   problem. More in detail, we provide necessary and sufficient conditions for the existence of a reduced-order data-driven UIO and we show that they are actually equivalent to the ones obtained in the model-based approach. This means that the data-driven implementation does not impose additional assumptions that would be unnecessary if we knew the system matrices.
On the other hand, the possibility to effectively design an rUIO from data, by avoiding   redundancy and minimizing the computational effort, without affecting the estimation performance, is quite important from a practical point of view. 
 
The paper is organized as follows. Section \ref{problem} introduces the rUIO design problem and presents a revised solution in the model-based framework. Section \ref{dd_ROUIO} proposes the problem solution   by using a data-driven approach.
Section \ref{sec:Example} illustrates the paper results by means of a numerical example.
Section \ref{sec:Conc} concludes the paper.

\smallskip
{\bf Notation.} Given a matrix $M\in {\mathbb R}^{p \times m}$, we denote by $M^\dag\in {\mathbb R}^{m \times p}$ its {\em Moore-Penrose inverse} \cite{BenIsraelGreville}. 
Note that if $M$ is of full row rank, then $M^\dag = M^\top(MM^\top)^{-1} $. 
The null and column space of $M$ are denoted by $\ker{(M)}$ and $\im (M)$, respectively. \\
Given a vector  sequence $v(t)\in\mathbb{R}^n$, where $t\in \mathbb Z _+$, we use the notation $\{v(t)\}_{t=0}^{N}$, $N\in\mathbb Z _+$, to indicate the sequence of vectors $v(0),\dots,v(N)$.  
\medskip

\section{Problem Formulation} \label{problem}
Consider a discrete-time linear time-invariant (LTI) system $\Sigma$, described by the following equations: 
\begin{subequations}\label{sys}
\begin{eqnarray}
x(t+1)&=&Ax(t)+Bu(t)+Ed(t), \label{sys.eq1}\\
y(t)&=&Cx(t), \label{sys.eq2}
\end{eqnarray}
\end{subequations}
where $t\in\mathbb Z_+$, $x(t)\in\mathbb{R}^n$ is the state of the system, $u(t)\in\mathbb{R}^m$ is the (known) 
 control input, $d(t)\in\mathbb{R}^q$ is the unknown input or disturbance, and $y(t)\in\mathbb{R}^p$ is the output. The dimensions of the system matrices are omitted, as they can be   deduced from the dimensions of the system variables.   The  analysis carried out in this paper would still hold, with minor changes, if we replaced the output equation in \eqref{sys.eq2} with $y(t) = C x(t) + D u(t)$. However, in order not to make the subsequent calculations unnecessarily  involved, in the following we assume that $y(t)$ only depends on $x(t)$.  Without loss of generality, we assume that $E\in \mathbb{R}^{n\times q}$ is of full column rank,  and 
that  $C\in \mathbb{R}^{p\times n}$ is of full row rank, i.e., $\rank (E) = q$ and
$\rank(C)=p$. 
If $E$ is not of full column rank, we can express it as the product of a full column rank matrix and a full row rank matrix, and define a new disturbance vector. 
On the other hand, if $C$ does not have full row rank, we can replace it 
with a maximal set of linearly independent rows. 
From a practical viewpoint,  unless one is trying to implement some fault detection strategy, in which case redundancy may be useful, 
it is reasonable to 
assume that  
measurement devices are located in such a way to maximize the collected information. 
Without loss of generality,  we assume that  $C=[\ C_1\ | \ C_2 \ ]$, with $C_1\in\mathbb{R}^{p\times (n-p)}$ and $C_2\in {\mathbb R}^{p \times p}$ nonsingular, so that 
  after partitioning the state vector as $x(t) =
[ \  x_1(t)^\top \ x_2(t)^\top ]^\top$, where $x_1(t)\in\mathbb{R}^{n-p}$ and $x_2(t)\in\mathbb{R}^p$,
the system output equation in \eqref{sys.eq2} can be rewritten as 
\be \label{new_out}
y(t) = C_1 x_1(t) + C_2 x_2(t).
\ee
By premultiplying both sides of \eqref{new_out} by $C_2^{-1}$, we obtain 
\be \label{x_2}
x_2(t) = C_2^{-1}y(t)-C_2^{-1}C_1 x_1(t).
\ee
Therefore, if we can estimate the first part of the state vector, namely $x_1(t)$, we can easily recover also the remaining state variables by making use of \eqref{x_2}. 

\begin{definition} \label{def:uio}
An LTI system $\hat \Sigma$ of order $n-p$, described by the equations
\begin{subequations}\label{uio}
\hspace{-0.2cm}
{\small
\begin{eqnarray}
z(t+1)\!\!\!&=&\!\!\!A_{UIO}z(t)+B_{UIO}^u u(t)+B_{UIO}^y y(t), \label{uio.eq1}\\
\hat x_1(t)\!\!\!&=&\!\!\!  z(t)+ D_{UIO}y(t), \label{uio.eq21}\\
\hat x_2(t)\!\!\!&=&\!\!\! - C_2^{-1} C_1  z(t)+(C_2^{-1}- C_2^{-1} C_1 D_{UIO}) y(t), \ \ \label{uio.eq2}
\end{eqnarray}}
\end{subequations}
where $t\in\mathbb Z_+$, $z(t)\in\mathbb{R}^{n-p}$ is the state and $\hat x(t) = [\hat x_1 (t)^\top \  \hat x_2 (t)^\top]^\top \in\mathbb{R}^n$ is the output,  is a {\em reduced-order unknown-input observer (rUIO)}
for    system $\Sigma$ in \eqref{sys.eq1}-\eqref{sys.eq2} if $e(t) \triangleq x(t)-\hat x(t)$ asymptotically converges to zero for every choice of  $z(0)$ and every input/output pair $(\{u(t)\}_{t\in {\mathbb Z}_+}, \{y(t)\}_{t\in {\mathbb Z}_+})$ of the system \eqref{sys.eq1}-\eqref{sys.eq2}.
\end{definition}

 In other words, a reduced-order unknown-input observer is an LTI system of dimension lower than the dimension of the system $\Sigma$ that, when fed by the input/output trajectories of $\Sigma$, generated corresponding to an arbitrary $x(0)$ and an arbitrary disturbance $d(t)$, provides as its output an asymptotic estimate 
 of the state of $\Sigma$, independently of its initial condition  $z(0)$.  
 \smallskip

Clearly, by the way we have defined it,  a reduced-order UIO $\hat \Sigma$ exists
if and only if a full-order UIO for $x_1(t)$ alone, described by  \eqref{uio.eq1} and \eqref{uio.eq21} (see \cite{Darouach2,Darouach}),  exists. The ``only if" part is obvious. Conversely, if the observer  \eqref{uio.eq1} - \eqref{uio.eq21}
ensures that $e_1(t) \triangleq x_1(t)-\hat x_1(t)$ converges to zero asymptotically, 
then by making use of \eqref{uio.eq2} we can ensure that 
\be
e_2(t) \triangleq x_2(t)-\hat x_2(t)= -C_2^{-1}C_1 e_1(t)
\label{e2}
\ee
 converges to zero in turn. 
So, from now on we will focus on the UIO 
\eqref{uio.eq1}-\eqref{uio.eq21}. To design it, it is convenient to partition  all the  matrices of the system in \eqref{sys.eq1}-\eqref{sys.eq2} conformably with the block partition of $C$, namely as
$$A = \left[\begin{array}{c|c}
A_{11} &  A_{12} \\
\hline
A_{21}&A_{22}
\end{array}\right], \ \ 
B = \left[\begin{array}{c}
B_1  \\
\hline
B_2
\end{array}\right], \ \
E = \left[\begin{array}{c}
E_1  \\
\hline
E_2
\end{array}\right].
$$
By splitting the dynamics of the two parts of the state vector, we can rewrite equation \eqref{sys.eq1} 
as  
\begin{subequations}
\begin{eqnarray}
\!\!\!\!\!\!x_1(t+1)\!\!\!\!\!&=&\!\!\!\!\!A_{11}x_1(t)+A_{12}x_2(t)+B_1u(t)+E_1d(t) \label{newsys.eq1}\\
\!\!\!\!\!\!x_2(t+1)\!\!\!\!\!&=&\!\!\!\!\!A_{21}x_1(t)+A_{22}x_2(t)+B_2u(t)+E_2d(t) \label{newsys.eq2}.
\end{eqnarray}
\end{subequations}
If we now substitute equation \eqref{x_2} 
 in \eqref{newsys.eq1}, we get \begin{eqnarray} \label{new_x1}
x_1(t+1) 
& = & [A_{11} - A_{12} C_2^{-1} C_1] x_1(t)+A_{12}C_2^{-1} y(t)\nonumber  \\
&+&B_1u(t)+E_1d(t).
\end{eqnarray}
  By making use of equations \eqref{x_2}, \eqref{newsys.eq2} and \eqref{new_x1}, and the UIO description in \eqref{uio.eq1}-\eqref{uio.eq21},  we   derive the dynamics of $e_1(t) = x_1(t)-\hat x_1 (t)$, i.e., 
  \begin{eqnarray}
 e_1(t+1) \!\!\!\!\!&=&\!\!\!\!\! x_1(t+1)-\hat x_1(t+1) \nonumber \\
\!\!\!\!\!&=&\!\!\!\!\! x_1(t+1) - z(t+1) -D_{UIO}y(t+1)  \nonumber \\
\!\!\!\!\!&=&\!\!\!\!\! x_1(t+1) - A_{UIO}z(t)-B_{UIO}^u u(t)-B_{UIO}^y y(t)  \nonumber\\
\!\!\!\!\!&-&\!\!\!\!\!D_{UIO}C_1 x_1(t+1) - D_{UIO} C_2 x_2(t+1)  \nonumber\\
\!\!\!\!\!&=&\!\!\!\!\!(I-D_{UIO}C_1)x_1(t+1)-A_{UIO}\hat x_1(t)  \nonumber\\
\!\!\!\!\!&+&\!\!\!\!\! A_{UIO}D_{UIO} y(t)-B_{UIO}^u u(t) - B_{UIO}^y y(t)  \nonumber\\
\!\!\!\!\!&-&\!\!\!\!\! D_{UIO} C_2 \big(A_{21}x_1(t)+A_{22}x_2(t)+B_2u(t) \nonumber \\
\!\!\!\!\!&+&\!\!\!\!\!E_2d(t)\big) \nonumber\\
\!\!\!\!\!&=&\!\!\!\!\! A_{UIO} e_1(t) + \big[(I-D_{UIO}C_1)(A_{11}-A_{12}C_2^{-1}C_1) \nonumber \\
\!\!\!\!\!&-&\!\!\!\!\!A_{UIO}-D_{UIO}C_2A_{21}+D_{UIO}C_2A_{22}C_2^{-1}C_1\big]x_1(t)  \nonumber\\
\!\!\!\!\!&+&\!\!\!\!\! \big[(I-D_{UIO}C_1)A_{12}C_2^{-1}+A_{UIO}D_{UIO}-B_{UIO}^y \nonumber \\
\!\!\!\!\!&-&\!\!\!\!\!D_{UIO}C_2A_{22}C_2^{-1}\big]y(t) \nonumber\\
\!\!\!\!\!&+&\!\!\!\!\! \big[(I-D_{UIO}C_1)B_1 -D_{UIO}C_2B_2-B_{UIO}^u\big]u(t) \nonumber\\
\!\!\!\!\!&+&\!\!\!\!\! \big[(I-D_{UIO}C_1)E_1-D_{UIO}C_2E_2\big]d(t). \nonumber
 \end{eqnarray} 
Therefore, $e_1(t)$ is independent of the disturbance $d(t)$ and asymptotically convergent to zero, for every choice of $u(t), t\in\mathbb Z_+$, $x(0)$ and $z(0)$, if and only if the following conditions hold: 
\begin{subequations}
\begin{align}
&A_{UIO} \ \text{is Schur stable} \label{cond1} \\
&A_{UIO} = (I-D_{UIO}C_1)(A_{11}-A_{12}C_2^{-1}C_1) \nonumber \\
&\quad \quad  \ \  -D_{UIO}C_2 (A_{21} -A_{22}C_2^{-1}C_1) \label{cond2} \\
&B_{UIO}^u = (I-D_{UIO}C_1)B_1-D_{UIO}C_2B_2 \label{cond3} \\
&B_{UIO}^y = A_{UIO}D_{UIO}+(I-D_{UIO}C_1)A_{12}C_2^{-1} \nonumber \\
&\quad \quad  \ \ -D_{UIO}C_2A_{22}C_2^{-1} \label{cond4} \\
&(I-D_{UIO}C_1)E_1 - D_{UIO}C_2E_2= 0. \label{cond5}
\end{align}
\end{subequations}
When so, the state estimation error on $x_1(t)$ obeys the autonomous asymptotically stable dynamics 
\be
e_1(t+1) = A_{UIO}e_1(t).
\label{err1}
\ee
Therefore, the system in \eqref{uio} is an rUIO if and only  if   conditions  \eqref{cond1}$
\div$\eqref{cond5} hold.  
\smallskip

We now introduce the concept of {\em acceptor}, previously adopted in the context of behavior theory \cite{BehaviorObservers}.
\medskip

\begin{definition} \label{def_accept} Given system $\Sigma$, described by the equations \eqref{sys.eq1}-\eqref{sys.eq2}, we say that an LTI system $\tilde \Sigma$ described by
\begin{eqnarray*}
z(t+1) &=& \tilde A z(t) + \tilde B \begin{bmatrix} u(t)\cr y(t)\end{bmatrix},\\
\tilde x(t) &=& \tilde C z(t) + \tilde D \begin{bmatrix} u(t)\cr y(t)\end{bmatrix},
\end{eqnarray*}
where $t\in\mathbb Z_+$, $z(t)\in\mathbb{R}^{n_z}$ is the state of the system, $u(t)\in\mathbb{R}^m$ and  $y(t)\in\mathbb{R}^p$ are the system inputs, and $\tilde x(t)\in\mathbb{R}^n$ is the output, 
is  an {\em acceptor} for $\Sigma$ if 
for every    input/output/state trajectory $(\{u(t)\}_{t\in {\mathbb Z}_+},\{y(t)\}_{t\in {\mathbb Z}_+},\{x(t)\}_{t\in {\mathbb Z}_+})$  generated by $\Sigma$, there exists an initial condition $z(0)$ for $\tilde \Sigma$ such that 
$\{\tilde x(t)\}_{t\in {\mathbb Z}_+} = \{x(t)\}_{t\in {\mathbb Z}_+}$ is the output of $\tilde \Sigma$ 
corresponding to the input pair $(\{u(t)\}_{t\in {\mathbb Z}_+},\{y(t)\}_{t\in {\mathbb Z}_+})$  and the initial condition $z(0)$.
\end{definition}

The following   result, 
that will be used later in the paper, formalizes the fact that a system described as in  \eqref{uio}  is an  acceptor  for   $\Sigma$ if and only  if  conditions  \eqref{cond2}$\div$\eqref{cond5} hold.
 \smallskip
  
 \begin{lemma} \label{acceptor}
 Given system $\Sigma$, described by the equations \eqref{sys.eq1}-\eqref{sys.eq2}, an LTI
 system  described  as in  \eqref{uio}  is   an  acceptor  for   $\Sigma$ if and only if its matrices satisfy \eqref{cond2}$\div$\eqref{cond5}.
 \end{lemma}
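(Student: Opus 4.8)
The plan is to reuse the error recursion already derived in the text, reading off its residual as exactly the expressions whose vanishing defines \eqref{cond2}--\eqref{cond5}. Recall that, after substituting the observer equations into the state dynamics, $e_1(t) \triangleq x_1(t) - \hat x_1(t)$ satisfies
\[ e_1(t+1) = A_{UIO}\,e_1(t) + M_x\,x_1(t) + M_y\,y(t) + M_u\,u(t) + M_d\,d(t), \]
where $M_x, M_y, M_u, M_d$ denote the four bracketed matrices in that derivation, whose vanishing is precisely \eqref{cond2}, \eqref{cond4}, \eqref{cond3}, and \eqref{cond5}, respectively. Both implications then follow by combining this identity with the defining relation $\hat x_1(0) = z(0) + D_{UIO}\,y(0)$ from \eqref{uio.eq21} and the link \eqref{e2} between the two error components. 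Note that, unlike the full rUIO characterization, the acceptor property does not require the Schur stability condition \eqref{cond1}; it asks only for exact matching under a suitable $z(0)$, which needs the residual to vanish but not $A_{UIO}$ to be stable.

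For the ``if'' direction I would assume \eqref{cond2}--\eqref{cond5}, so that $M_x = M_y = M_u = M_d = 0$ and the error obeys the autonomous recursion $e_1(t+1) = A_{UIO}\,e_1(t)$. Given any input/output/state trajectory of $\Sigma$, I would choose $z(0) \triangleq x_1(0) - D_{UIO}\,y(0)$, which forces $e_1(0) = 0$ and hence $e_1(t) = 0$ for all $t$; by \eqref{e2} this also yields $e_2(t) = -C_2^{-1}C_1\,e_1(t) = 0$. Thus $\hat x(t) = x(t)$ for all $t$, which is exactly the acceptor property of Definition \ref{def_accept}.

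For the converse I would assume the acceptor property and fix an arbitrary trajectory. Since the acceptor guarantees $\hat x(t) = x(t)$ for all $t \geq 0$, matching at $t=0$ pins down $z(0) = x_1(0) - D_{UIO}\,y(0)$ and gives $e_1(0) = 0$; matching at $t = 1$ then forces $e_1(1) = 0$, and the recursion above collapses to $M_x\,x_1(0) + M_y\,y(0) + M_u\,u(0) + M_d\,d(0) = 0$. The key point is that this single linear relation must hold for \emph{every} trajectory, and the four arguments range independently over $\mathbb{R}^{n-p} \times \mathbb{R}^p \times \mathbb{R}^m \times \mathbb{R}^q$: indeed $u(0)$ and $d(0)$ are free inputs, $x_1(0)$ is a free part of the initial state, and, using that $C_2$ is nonsingular, for any prescribed $y(0)$ one realizes it by setting $x_2(0) = C_2^{-1}(y(0) - C_1 x_1(0))$. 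Independence then forces $M_x = M_y = M_u = M_d = 0$, i.e.\ exactly \eqref{cond2}--\eqref{cond5}.

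The only genuine subtlety, and the step I would be most careful about, is this independence argument: one must verify that the joint attainability of all $(x_1(0), y(0), u(0), d(0))$ at the initial instant is not obstructed by the output constraint $y = Cx$, which is precisely where the nonsingularity of $C_2$ enters. Everything else is a direct reading-off of coefficients from the previously derived error recursion, so no further computation is required.
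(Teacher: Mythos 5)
Your proof is correct and follows essentially the same route as the paper: the ``if'' direction (autonomous error dynamics plus the choice $z(0)=x_1(0)-D_{UIO}y(0)$, then propagating $e_1\equiv 0$ to $e_2$ via \eqref{e2}) matches the paper's argument verbatim. For the converse the paper merely asserts that a non-autonomous error dynamics can be excited by some $x(0)$, $u$, $d$; your version is the direct (rather than contrapositive) form and actually supplies the missing justification, namely that $(x_1(0),y(0),u(0),d(0))$ ranges over all of $\mathbb{R}^{n-p}\times\mathbb{R}^p\times\mathbb{R}^m\times\mathbb{R}^q$ thanks to the nonsingularity of $C_2$, which is exactly the point that needs checking.
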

 \begin{proof}
If conditions \eqref{cond2}$\div$\eqref{cond5} hold, the dynamics of the estimation error is described as in \eqref{err1}.
Let $(\{u(t)\}_{t\in {\mathbb Z}_+},\{y(t)\}_{t\in {\mathbb Z}_+},\{x(t)\}_{t\in {\mathbb Z}_+})$ be an  input/output/state trajectory generated by $\Sigma$.
If we assume $z(0)= x_1(0) - D_{UIO} y(0)$ then $\hat x_1(0)=x_1(0)$ and  $e_1(0) = 0$. Therefore $e_1(t)=0$ for every $t\in {\mathbb Z}_+$. This ensures that $e_2$ is identically zero, in turn, and hence $\hat x(t)=x(t)$ for every $t\in {\mathbb Z}_+$.
\smallskip

Conversely, if at least one of the conditions \eqref{cond2}$\div$\eqref{cond5} does not hold, the dynamics of the estimation error is 
not that of an autonomous system. So, it is always possible to find an initial condition $x(0)$ and input signals 
$\{u(t)\}_{t\in {\mathbb Z}_+}$ and $\{d(t)\}_{t\in {\mathbb Z}_+}$ such that for every $z(0)$ the estimation error is not identically zero.
 \end{proof}
 
It is worth remarking that  while a UIO is always  an acceptor, the converse holds if and  if  also condition \eqref{cond1} holds. 
 \smallskip
 
 We now provide necessary and sufficient conditions for the solvability of \eqref{cond1}$\div$\eqref{cond5} and thus for the existence of a reduced-order UIO \eqref{uio}.
In \cite{Hou2} these conditions have been derived under the assumption that $C=[\ 0 \ | \ I_p \ ]$, a situation we can always reduce ourselves to by resorting to a suitable change of basis (see  \cite{Kudva}).
%
Therefore, in the following lemma we only recall these conditions without providing the proof. 

\begin{lemma}\label{cns} There exist matrices $A_{UIO}, B_{UIO}^u, B_{UIO}^y$ and $D_{UIO}$ of suitable sizes that satisfy
conditions  \eqref{cond1}$\div$\eqref{cond5},  and hence there exists an  rUIO of the form \eqref{uio}, if and only if  
 \vspace{.2cm}

\noindent (a) ${\rm rank} (CE)={\rm rank}(E)=q$, and  \vspace{.2cm}

\noindent (b) $
{\rm rank} \begin{bmatrix} zI_n - A & -E\cr
C & 0\end{bmatrix} = n+q, \ \forall z\in {\mathbb C}, |z|\ge 1,$
\vspace{0.2cm}

\noindent
or, equivalently (see Theorem 2 in   \cite{Darouach2}), the triple $(A,C,E)$ is strong* detectable. 
\end{lemma}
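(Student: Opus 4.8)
The plan is to exploit the fact that, once $D_{UIO}$ is fixed, equations \eqref{cond2}, \eqref{cond3} and \eqref{cond4} merely \emph{define} $A_{UIO}$, $B_{UIO}^u$ and $B_{UIO}^y$, so that the only genuine constraints on $D_{UIO}$ are the decoupling condition \eqref{cond5} and the Schur requirement \eqref{cond1}. Writing $F := A_{11}-A_{12}C_2^{-1}C_1$, $G := A_{21}-A_{22}C_2^{-1}C_1$ and $H := C_1F+C_2G$, condition \eqref{cond2} becomes the compact expression $A_{UIO}=F-D_{UIO}H$, while, since $CE=C_1E_1+C_2E_2$, condition \eqref{cond5} is equivalent to the linear matrix equation
\begin{equation*}
D_{UIO}\,(CE)=E_1 .
\end{equation*}
Thus the existence of an rUIO amounts to finding a $D_{UIO}$ solving this equation and, simultaneously, rendering $F-D_{UIO}H$ Schur stable.

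First I would characterize solvability of $D_{UIO}(CE)=E_1$. Such an equation $X(CE)=E_1$ is solvable if and only if the rows of $E_1$ lie in the row space of $CE$, i.e. $\rank\!\begin{bmatrix} CE\\ E_1\end{bmatrix}=\rank(CE)$. Premultiplying $\begin{bmatrix} CE\\ E_1\end{bmatrix}$ by the nonsingular matrix $\begin{bmatrix} C_2^{-1} & -C_2^{-1}C_1\\ 0 & I\end{bmatrix}$ reduces it to $\begin{bmatrix} E_2\\ E_1\end{bmatrix}$, whose rank equals $\rank(E)=q$; hence $\rank\!\begin{bmatrix} CE\\ E_1\end{bmatrix}=q$ unconditionally, and the equation is solvable precisely when $\rank(CE)=q$, which (since $\rank(E)=q$) is exactly condition~(a). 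This establishes that (a) is necessary and sufficient for \eqref{cond5} to admit a solution.

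Assuming henceforth that (a) holds, I would then note $(CE)^\dag(CE)=I_q$, so that $D_0:=E_1(CE)^\dag$ solves the equation and the general solution is $D_{UIO}=D_0+W\Pi$, with $\Pi:=I_p-(CE)(CE)^\dag$ the orthogonal projector onto $\im(CE)^\perp$ and $W$ a free parameter. Substituting into $A_{UIO}=F-D_{UIO}H$ gives $A_{UIO}=\hat A-W\hat C$, where $\hat A:=F-D_0H$ and $\hat C:=\Pi H$. By the standard output-injection (dual stabilizability) result, some $W$ makes $A_{UIO}$ Schur stable if and only if the pair $(\hat A,\hat C)$ is detectable, i.e. $\rank\!\begin{bmatrix} zI_{n-p}-\hat A\\ \hat C\end{bmatrix}=n-p$ for all $|z|\ge 1$.

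The crux, and the step I expect to be the main obstacle, is to show that this detectability is equivalent to condition~(b); the plan here is a sequence of rank-preserving block operations on the pencil in~(b). Scaling its bottom block row $[\,C_1\ C_2\ 0\,]$ by $C_2^{-1}$ and using it to annihilate the middle column block splits off an $I_p$ block, giving $\rank\!\begin{bmatrix} zI_n-A & -E\\ C & 0\end{bmatrix}=p+\rank\!\begin{bmatrix} zI_{n-p}-F & -E_1\\ -zC_2^{-1}C_1-G & -E_2\end{bmatrix}$. Left-multiplying the remaining $n\times(n-p+q)$ block by the nonsingular $\begin{bmatrix} I & 0\\ C_1 & C_2\end{bmatrix}$ turns its lower blocks into $-H$ and $-CE$; then, using $D_0(CE)=E_1$, a further row operation clears the upper $-E_1$ and produces $\begin{bmatrix} zI_{n-p}-\hat A & 0\\ -H & -CE\end{bmatrix}$. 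Since $CE$ has full column rank, a kernel vector $[\,v^\top\ w^\top\,]^\top$ forces $w$ to be determined by $v$, and a nontrivial kernel exists exactly when there is $v\neq 0$ with $(zI-\hat A)v=0$ and $Hv\in\im(CE)$; because $\hat Cv=\Pi Hv=0$ iff $Hv\in\ker\Pi=\im(CE)$, this is precisely a rank drop of $\begin{bmatrix} zI_{n-p}-\hat A\\ \hat C\end{bmatrix}$. Tracking the constant contribution $p$ then yields that condition~(b) holds if and only if $(\hat A,\hat C)$ is detectable, closing the chain of equivalences. The delicate points will be the bookkeeping of the successive block transformations and the projector identity $\ker\Pi=\im(CE)$ that ties $\hat C$ to membership in $\im(CE)$.
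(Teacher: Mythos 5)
Your argument is correct, and it is worth noting that the paper itself gives \emph{no} proof of Lemma~\ref{cns}: the authors explicitly state that they only recall the conditions, deferring to \cite{Hou2} (where they are derived under the normalization $C=[\,0\;|\;I_p\,]$) and to \cite{Darouach2,Kudva}. Your proof is therefore a self-contained substitute rather than a variant of the paper's argument, and it has the merit of working directly in the paper's coordinates ($C=[\,C_1\;|\;C_2\,]$ with $C_2$ nonsingular) instead of first passing to the normalized basis. I checked the key computations: \eqref{cond2} does collapse to $A_{UIO}=F-D_{UIO}H$ with $H=C_1F+C_2G$; \eqref{cond5} is exactly $D_{UIO}(CE)=E_1$; the nonsingular left factor $\left[\begin{smallmatrix} C_2^{-1} & -C_2^{-1}C_1\\ 0 & I\end{smallmatrix}\right]$ indeed maps $\left[\begin{smallmatrix} CE\\ E_1\end{smallmatrix}\right]$ to $\left[\begin{smallmatrix} E_2\\ E_1\end{smallmatrix}\right]$, so solvability of \eqref{cond5} is equivalent to condition (a); and the block reduction of the pencil in (b) to $\left[\begin{smallmatrix} zI-\hat A & 0\\ -H & -CE\end{smallmatrix}\right]$ (contributing $p$ from the $C_2$ block) is arithmetically right, as is the identification of a rank drop with a common kernel vector of $zI-\hat A$ and $\hat C=\Pi H$ via $\ker\Pi=\im(CE)$. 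Two points deserve explicit mention if you write this up: first, the equivalence of (b) with detectability of $(\hat A,\hat C)$ is established only \emph{under} (a) (you need $D_0$ and the full column rank of $CE$), which is logically harmless since existence of an rUIO forces (a) anyway, but the quantifier order should be stated; second, the ``only if'' direction needs the observation that \emph{every} admissible $D_{UIO}$ has the form $D_0+W\Pi$, so that Schur stability of $F-D_{UIO}H=\hat A-W\hat C$ for some admissible $D_{UIO}$ already implies detectability of $(\hat A,\hat C)$ --- you use this implicitly and it is correct.
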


The conditions stated in the previous lemma are exactly the same conditions that guarantee the existence of a full-order unknown-input observer \cite{Darouach2,Darouach}. Therefore,  there exists a reduced-order UIO if and only if there exists a full-order UIO. 

We are now ready to formalize the problem we want to solve. 
\begin{problem*}
Given system $\Sigma$ described as in \eqref{sys.eq1}-\eqref{sys.eq2},   with unknown matrices, design (if possible) a data-driven reduced-order unknown-input   observer 
for $\Sigma$, described by equations \eqref{uio}, by making use only of some data collected during an offline finite time experiment. 
\end{problem*} 
 
\section{Data-Driven Reduced-Order UIO} \label{dd_ROUIO}
 As in \cite{TAC-UIO,Ferrari-Trecate}, we suppose that the system matrices are unknown and that we have performed an {\em offline} experiment during which we have collected some input/output/state trajectories in the time-interval $[0,T-1]$, with $T\in\mathbb Z_+$. It has already been highlighted in \cite{TAC-UIO,Ferrari-Trecate} that assuming to have access to the state,  during the offline experiment, is necessary, since it would not be possible to uniquely identify the state of the system, and hence to construct a  UIO, without knowing the dimension and the basis of the state-space.
The input/output/state trajectories can be represented by the following sequences of vectors, i.e., $u_d =  \{u_d(t)\}_{t=0}^{T-2}$,  $y_d = \{y_d(t)\}_{t=0}^{T-1}$ and  $x_d =  \{x_d(t)\}_{t=0}^{T-1}$. Even if we cannot measure the disturbance $d(t)$, it is however convenient to define also the sequence of historical unknown input data, namely $d_d = \{d_d(t)\}_{t=0}^{T-2}$. 

For the subsequent analysis, it is useful to group the data into the following matrices:
\begin{eqnarray*}
U_p &\triangleq &\begin{bmatrix}
u_d(0) & \dots & u_d(T-2)
\end{bmatrix} \in {\mathbb R}^{m \times (T-1)}, \\
X_p &\triangleq& \begin{bmatrix}
x_d(0) & \dots & x_d(T-2)
\end{bmatrix} \in {\mathbb R}^{n \times (T-1)}, \\
X_f &\triangleq &\begin{bmatrix}
x_d(1) & \dots & x_d(T-1)
\end{bmatrix} \in {\mathbb R}^{n \times (T-1)}, \\
Y_p &\triangleq &\begin{bmatrix}
y_d(0) & \dots & y_d(T-2)
\end{bmatrix}\in {\mathbb R}^{p \times (T-1)}, \\
Y_f &\triangleq &\begin{bmatrix}
y_d(1) & \dots & y_d(T-1)
\end{bmatrix} \in {\mathbb R}^{p \times (T-1)}, \\
D_p &\triangleq& \begin{bmatrix}
d_d(0) & \dots & d_d(T-2)
\end{bmatrix} \in {\mathbb R}^{q \times (T-1)}. 
\end{eqnarray*}
where the subscripts $p$ and $f$ stand for past and future, respectively. 

Before providing the data-driven formulation of the reduced-order UIO,  we introduce the following assumption (the same  we adopted in \cite{TAC-UIO}  for full-order UIOs, and that follows from more restrictive assumptions of persistence of excitation of the input sequences $u_d$ and $d_d$):

\smallskip
\noindent {\bf Assumption:}
The matrix $\begin{bmatrix}
U_p^\top &
D_p^\top &
X_p^\top 
\end{bmatrix}^\top$ is of full row rank, i.e., $m+q+n$.

\medskip
Since the historical data have been generated by the system $\Sigma$, they have to satisfy \eqref{sys.eq1}-\eqref{sys.eq2} and in particular it must hold 
\be \label{Yp}
Y_p = C X_p.
\ee
Under the previous Assumption, the matrix $X_p$ is of full row rank and thus admits a right inverse. Therefore, based on equation \eqref{Yp}, it is possible to uniquely identify the output matrix $C$ from data as 
$$C = Y_p X_p^\dag = Y_p X_p^\top (X_p X_p^\top)^{-1}.$$
Note that 
if we had assumed $y(t)=C x(t) + D u(t)$,
by making use of the identity 
$$Y_p = C X_p + D U_p= \begin{bmatrix} C & D \end{bmatrix} \begin{bmatrix} X_p\cr U_p\end{bmatrix},$$ 
and exploiting again the Assumption, we could have uniquely identified  both $C$ and  $D$.

Once we have recovered the matrix $C$ from the output/state data, we can also check if it has full row rank. 
If not, we can discard the measurements that are linearly dependent on the others. Again, there is no loss of generality in assuming that 
$C$ can be block-partitioned as $C=[\ C_1\ | \ C_2 \ ]$, where 
$C_2$ is nonsingular square\footnote{If this is not the case, we can resort to a permutation matrix $P$ and replace $C$ with $CP$, and $X_p, X_f, U_p, U_f$ with $P^\top X_p,$ $P^\top X_f,$ $P^\top U_p, P^\top U_f$.}. 
Now that we have the matrix $C$ along with its partition, 
we can split the generic state vector $x_d(t)$, belonging to the sequence of historical data $x_d$, into two blocks
$x_d(t) = \begin{bmatrix} x_{d,1}(t)^\top & x_{d,2}(t)^\top \end{bmatrix}^\top$, conformably with the block partition of $C$.
Consequently,  the
matrices of the state data  split into two parts, namely 
\begin{eqnarray*}
X_{p,1} &\triangleq& \begin{bmatrix}
x_{d,1}(0) & \dots & x_{d,1}(T-2)
\end{bmatrix} \in {\mathbb R}^{(n-p) \times (T-1)}, \\
X_{f,1}&\triangleq &\begin{bmatrix}
x_{d,1}(1) & \dots & x_{d,1}(T-1)
\end{bmatrix} \in {\mathbb R}^{(n-p) \times (T-1)}, \\
X_{p,2} &\triangleq& \begin{bmatrix}
x_{d,2}(0) & \dots & x_{d,2}(T-2)
\end{bmatrix} \in {\mathbb R}^{p \times (T-1)}, \\
X_{f,2}&\triangleq &\begin{bmatrix}
x_{d,2}(1) & \dots & x_{d,2}(T-1)
\end{bmatrix} \in {\mathbb R}^{p \times (T-1)},
\end{eqnarray*}
and we have the following identities 
\begin{eqnarray*}
X_{p,2} &=& C_2^{-1}Y_p-C_2^{-1}C_1 X_{p,1} \\
X_{f,2} &=& C_2^{-1}Y_f-C_2^{-1}C_1 X_{f,1}. 
\end{eqnarray*}

 When dealing with data-driven techniques, it is important that the collected data 
 are representative of the underlying system. 
The following definition aims at capturing this concept. 
\smallskip

\begin{definition}\label{compatible_traj} \cite{TAC-UIO,Ferrari-Trecate}
The set of   (input/output/state) trajectories $(\{u(t)\}_{t\in {\mathbb Z}_+},\{y(t)\}_{t\in {\mathbb Z}_+},\{x(t)\}_{t\in {\mathbb Z}_+})$  is said to be {\em compatible with the historical data} $(u_d, y_d, x_d)$ if 
\be\label{compatibility}
\begin{bmatrix}
u(t) \\
y(t) \\
x(t) \\
x(t+1) 
\end{bmatrix} \in 
\im \left(
\begin{bmatrix}
U_p \\
Y_p \\
X_p \\
X_f 
\end{bmatrix}
\right), \ \forall t\in {\mathbb Z}_+.
\ee
\end{definition}

\smallskip
Under the Assumption   we made on the data, it has  been proved in \cite{TAC-UIO} (see, also, \cite{Ferrari-Trecate}) that the trajectories generated by the system $\Sigma$ in \eqref{sys.eq1}-\eqref{sys.eq2} are all and only those compatible with the given historical data. 
\medskip

In \cite[Lemma 2]{Ferrari-Trecate}, it has also been shown that there exists an acceptor of order $n$ for $\Sigma$ described by
\begin{subequations}\label{FOuio}
{\small\begin{eqnarray}
z(t+1)\!\!\!&=&\!\!\!A_{UIO}z(t)+B_{UIO}^u u(t)+B_{UIO}^y y(t), \label{FOuio.eq1}\\
\hat x(t)\!\!\!&=&\!\!\!  z(t)+ D_{UIO}y(t), \label{FOuio.eq21}
\end{eqnarray}}
\end{subequations}
if and only if 
\be\label{ker_inclusion}
\ker{(X_f)}\supseteq \ker{\left(\begin{bmatrix}
U_p \\
Y_p \\
Y_f \\
X_p 
\end{bmatrix}\right)}.
\ee
This result, applied to the reduced-order scenario, leads to the following proposition (whose proof can be obtained by suitably adjusting that of Lemma 9 in \cite{TAC-UIO}). 

\begin{proposition} \label{reduced_acceptor}
There exists an acceptor described as in  \eqref{uio} for $\Sigma$ (or equivalently,  an acceptor described by \eqref{uio.eq1}-\eqref{uio.eq21} for the trajectories $(\{x_1(t)\}_{t\in {\mathbb Z}_+}, \{u(t)\}_{t\in {\mathbb Z}_+},\{y(t)\}_{t\in {\mathbb Z}_+})$ of $\Sigma$),  
whose  matrices $A_{UIO}$, $B_{UIO}^u$, $B_{UIO}^y$ and $D_{UIO}$ are built using the collected data $U_p,Y_p,Y_f,X_{p,1}$, 
if and only if 
\be\label{ker_reduced}
\ker{(X_{f,1})}\supseteq \ker{\left(\begin{bmatrix}
U_p \\
Y_p \\
Y_f \\
X_{p,1} 
\end{bmatrix}\right)}.
\ee
If so,   for every 
$ 
 \left[\begin{array}{c|c|c|c}
 S_1 & S_2 & S_3 & S_4
 \end{array}
 \right]\in {\mathbb R}^{n \times (m+2p+n-p)}$ satisfying
 \be\label{Xf1}
 X_{f,1} =  \left[\begin{array}{c|c|c|c}
 S_1 & S_2 & S_3 & S_4
 \end{array}
 \right]
\begin{bmatrix}
U_p \\
Y_p \\
Y_f \\
X_{p,1} 
\end{bmatrix},
\ee 
 the  matrices of  an acceptor can be expressed in terms of the matrices $S_1, S_2, S_3$ and $S_4$ as
\begin{eqnarray}
A_{UIO}&\triangleq& S_4 , \label{Auio} \\ 
B_{UIO}^u &\triangleq& S_1, \label{Buio_u} \\
B_{UIO}^y  &\triangleq& S_2+S_4S_3, \label{Buio_y} \\
D_{UIO} &\triangleq& S_3. \label{Duio}
\end{eqnarray}
Conversely, for every acceptor described by the  matrices $A_{UIO}$, $B_{UIO}^u$, $B_{UIO}^y$ and $D_{UIO}$,
we can obtain a solution of \eqref{Xf1} by assuming
\begin{eqnarray}
S_1&\triangleq& B_{UIO}^u , \label{S1} \\
S_2   &\triangleq& B_{UIO}^y - A_{UIO}D_{UIO}, \label{S2} \\
S_3 &\triangleq& D_{UIO}, \label{S3}\\
S_4&\triangleq& A_{UIO}. \label{S4} 
\end{eqnarray}

\end{proposition}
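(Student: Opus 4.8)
The plan is to reduce the statement to the model-based conditions \eqref{cond2}--\eqref{cond5} characterized in Lemma \ref{acceptor}, by routing everything through a single data identity. First I would observe that \eqref{Xf1}, viewed as a linear equation in the unknown block matrix $[\,S_1\,|\,S_2\,|\,S_3\,|\,S_4\,]$, is solvable if and only if the row space of $X_{f,1}$ is contained in the row space of $[\,U_p^\top\ Y_p^\top\ Y_f^\top\ X_{p,1}^\top\,]^\top$; since the row space of a matrix is the orthogonal complement of its kernel, this containment is exactly \eqref{ker_reduced}. Hence one half of the claimed equivalence is the purely linear-algebraic fact that \eqref{ker_reduced} holds iff \eqref{Xf1} admits a solution. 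Moreover, the assignments \eqref{Auio}--\eqref{Duio} and \eqref{S1}--\eqref{S4} are mutually inverse, so they set up a bijection between solutions $[\,S_1\,|\,S_2\,|\,S_3\,|\,S_4\,]$ of \eqref{Xf1} and quadruples $(A_{UIO},B_{UIO}^u,B_{UIO}^y,D_{UIO})$; what remains is to show that these quadruples are precisely the matrices of acceptors.

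To connect \eqref{Xf1} with the acceptor conditions I would rewrite it in acceptor form, i.e.\ as the data identity
\[
X_{f,1}=A_{UIO}X_{p,1}+B_{UIO}^u U_p+(B_{UIO}^y-A_{UIO}D_{UIO})Y_p+D_{UIO}Y_f,
\]
which coincides with \eqref{Xf1} under \eqref{Auio}--\eqref{Duio}. The core of the proof is then to verify that this identity holds if and only if \eqref{cond2}--\eqref{cond5} are satisfied. To do so I would express both sides in terms of the four data blocks $X_{p,1},U_p,Y_p,D_p$: on the left, $X_{f,1}$ expands through \eqref{new_x1} applied columnwise to the data (after substituting \eqref{x_2} for $X_{p,2}$); on the right, $Y_f=C_1X_{f,1}+C_2X_{f,2}$ is expanded using \eqref{newsys.eq2} together with the analogous substitution for $X_{f,2}$, which conveniently collects the coefficient of $D_p$ into $C_1E_1+C_2E_2=CE$.

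The key step, and the main obstacle, is the coefficient matching. It is legitimate only because the stacked matrix $[\,U_p^\top\ D_p^\top\ X_{p,1}^\top\ Y_p^\top\,]^\top$ has full row rank: this follows from the Assumption on $[\,U_p^\top\ D_p^\top\ X_p^\top\,]^\top$ by left-multiplication with the block lower-triangular matrix that sends $X_{p,2}$ to $Y_p=C_1X_{p,1}+C_2X_{p,2}$, which is nonsingular because $C_2$ is invertible. Given this independence, equating the coefficients of $D_p$, $U_p$, $X_{p,1}$ and $Y_p$ on the two sides yields, respectively, \eqref{cond5}, \eqref{cond3}, \eqref{cond2} and \eqref{cond4}. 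I expect the care to lie in carrying the $C_2^{-1}C_1$ terms correctly through the $X_{p,1}$ and $Y_p$ coefficients, so that \eqref{cond2} and \eqref{cond4} emerge in exactly the stated form.

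Finally I would assemble the chain of equivalences: \eqref{ker_reduced} holds iff \eqref{Xf1} is solvable, iff (through the bijection, using that $S$ solves \eqref{Xf1} exactly when the associated quadruple satisfies the data identity, and hence \eqref{cond2}--\eqref{cond5}) there is a quadruple satisfying \eqref{cond2}--\eqref{cond5}, iff by Lemma \ref{acceptor} there is an acceptor of the form \eqref{uio}; the explicit formulas \eqref{Auio}--\eqref{Duio} and \eqref{S1}--\eqref{S4} then record the two directions of the correspondence. The equivalence with an acceptor for the reduced trajectories $(\{x_1(t)\}_{t\in {\mathbb Z}_+},\{u(t)\}_{t\in {\mathbb Z}_+},\{y(t)\}_{t\in {\mathbb Z}_+})$ is already supplied by the discussion following Definition \ref{def:uio}, so no separate argument is needed there.
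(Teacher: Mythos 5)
Your proof is correct, and all of its ingredients check out: the solvability of \eqref{Xf1} is indeed equivalent to \eqref{ker_reduced} by the standard row-space/kernel duality, the maps \eqref{Auio}--\eqref{Duio} and \eqref{S1}--\eqref{S4} are mutually inverse, the stacked matrix built from $U_p,D_p,Y_p,X_{p,1}$ has full row rank under the Assumption because $C_2$ is invertible, and the coefficient matching does produce exactly \eqref{cond2}--\eqref{cond5}. The route, however, is not the one the paper intends for this proposition: the paper defers the proof to an adaptation of Lemma 9 in \cite{TAC-UIO}, which argues directly at the level of trajectories (any input/output/state trajectory of $\Sigma$ is compatible with the data in the sense of Definition \ref{compatible_traj}, so the acceptor property for all trajectories reduces to the single data identity \eqref{Xf1}), without ever invoking the unknown system matrices. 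What you do instead is detour through the model-based characterization of Lemma \ref{acceptor}, substituting $Y_f$ via the (unknown) system matrices and equating coefficients against the full-row-rank matrix $[\,U_p^\top\ D_p^\top\ Y_p^\top\ X_{p,1}^\top\,]^\top$ --- which is precisely the computation the paper performs afterwards in Remark \ref{equivalence} for the separate purpose of showing that \eqref{ker_reduced} matches the model-based conditions. So you have effectively merged Proposition \ref{reduced_acceptor} and Remark \ref{equivalence} into one self-contained argument. This buys self-containedness (no appeal to the external Lemma 9) at the mild aesthetic cost of routing a data-driven statement through the system matrices, which is harmless since they exist even if unknown; the paper's trajectory-based route is cleaner in that respect but relies on the imported compatibility result.
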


\begin{remark}\label{equivalence}  
From Lemma \ref{acceptor} and Proposition \ref{reduced_acceptor},
it follows that the kernels inclusion in \eqref{ker_reduced} corresponds exactly to   conditions \eqref{cond2}$\div$\eqref{cond5} derived in the model-based approach, by imposing the decoupling from all the exogenous variables in the estimation error dynamics.
Indeed, from  Proposition \ref{reduced_acceptor} it follows that  \eqref{ker_reduced} is equivalent to the existence of 
 matrices $A_{UIO}$, $B_{UIO}^u$, $B_{UIO}^y$ and $D_{UIO}$ such that
{\small 
$$
 X_{f,1} = \!\!  \left[\begin{array}{c|c|c|c}
 B_{UIO}^u &\!\!  B_{UIO}^y-A_{UIO}D_{UIO}  &\!\!  D_{UIO} &\!\!  A_{UIO}
 \end{array}
 \right]\!\!\!
\begin{bmatrix}
U_p \\
Y_p \\
Y_f \\
X_{p,1} 
\end{bmatrix}.
$$ }
If we now exploit the fact that the data have been generated by the system $\Sigma$, we can substitute $Y_f$ 
in the previous equation with the following expression
$Y_f = C_1 X_{f,1}+C_2 X_{f,2} = C_1 (A_{11}X_{p,1}+A_{12} X_{p,2}+B_1U_p+E_1D_p)+C_2(A_{21}X_{p,1}+A_{22} X_{p,2}+B_2U_p+E_2D_p) = [C_1(A_{11}-A_{12}C_2^{-1}C_1)+C_2(A_{21}-A_{22}C_2^{-1}C_1)]X_{p,1}+CB U_p+(C_1A_{12}C_2^{-1}+C_2A_{22}C_2^{-1})Y_p+CE D_p$ and obtain
\begin{eqnarray} \label{xf1}
\!\! X_{f,1} \!\!\!&=&\!\!\!
 \left[ B_{UIO}^u + D_{UIO}CB  \ | \ D_{UIO} CE \ | \ B_{UIO}^y \right.  \nonumber \\
\!\!\!&&\!\!\! \left. -A_{UIO}D_{UIO} +D_{UIO}(C_1A_{12} +C_2 A_{22})C_2^{-1} | \right.   \nonumber\\
\!\!\!&&\!\!\! \left. A_{UIO} +D_{UIO}[C_1(A_{11}-A_{12}C_2^{-1}C_1)  \right.  \nonumber \\
\!\!\!&&\!\!\! \left.+C_2(A_{21}-A_{22}C_2^{-1}C_1)] \right]
\begin{bmatrix}
U_p \\
D_p \\
Y_p  \\
X_{p,1}
\end{bmatrix}. \!\!\!\!
\end{eqnarray}

 At the same time, the historical data have to satisfy the equations of system $\Sigma$ (see \eqref{new_x1}) and thus $X_{f,1}$ can also be written as 
\begin{eqnarray} \label{xf1_sys}
\!\!\!\!\!\!\! X_{f,1} \!\!\!\!&=&\!\!\!\!
 \left[ B_1  |  E_1  |  A_{12}C_2^{-1}  | A_{11}-A_{12}C_2^{-1}C_1\right] \!\! \begin{bmatrix}
U_p \\
D_p \\
Y_p  \\
X_{p,1}
\end{bmatrix}\!. \ \ 
\end{eqnarray}
Since the matrix $\begin{bmatrix}
U_p^\top &
D_p^\top &
Y_p^\top  &
X_{p,1}^\top
\end{bmatrix}^\top$ is of full row rank \footnote{The full row rank property of the matrix $\begin{bmatrix}
U_p^\top &
D_p^\top &
Y_p^\top  &
X_{p,1}^\top
\end{bmatrix}^\top$ follows directly from the Assumption on the data and the full row rank property of the matrix $C$. Indeed, the matrix $\begin{bmatrix}
U_p^\top &
D_p^\top &
Y_p^\top  &
X_{p,1}^\top
\end{bmatrix}^\top$ can be written as 
$$
\begin{bmatrix}
U_p \\
D_p \\
Y_p \\
X_{p,1}
\end{bmatrix}=
\left[\begin{array}{c|c|c}
I &  0 & 0 \\
\hline
0 & I & 0\\
\hline
0 & 0 & \begin{array}{c|c}
C_1 & C_2 \\
\hline
I & 0 
\end{array}
\end{array}\right]
\begin{bmatrix}
U_p \\
D_p \\
X_{p,1} \\
X_{p,2}   
\end{bmatrix},$$
and hence is of full row rank since it is the product of a nonsingular square matrix and a full row rank matrix, respectively. }, 
by equating the right hand side of \eqref{xf1} and \eqref{xf1_sys}
 we obtain 
exactly the conditions in \eqref{cond2}$\div$\eqref{cond5}.
\end{remark}

Next, we show that the two conditions in \eqref{ker_inclusion} and \eqref{ker_reduced} are actually equivalent, namely 
we can build a reduced-order acceptor for the input/output/state trajectories of system $\Sigma$
if and only if we can build a full order acceptor for the same system. 

\begin{proposition}\label{ker_ridotto}
Under the Assumption on the data and the hypothesis that $C_2$ is nonsingular, conditions
\eqref{ker_inclusion} and \eqref{ker_reduced} are equivalent.
%
\end{proposition}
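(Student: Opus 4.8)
The plan is to reduce the claimed equivalence to two elementary observations: that the kernels appearing on the right-hand sides of \eqref{ker_inclusion} and \eqref{ker_reduced} coincide, and that on that common kernel the block $X_{f,2}$ is forced to vanish whenever $X_{f,1}$ does. Throughout I would work directly with vectors $v$ in the relevant kernels, recalling that all these kernels are subspaces of $\mathbb{R}^{T-1}$.

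First I would show that
\[
\ker\left(\begin{bmatrix} U_p \\ Y_p \\ Y_f \\ X_p \end{bmatrix}\right)
=\ker\left(\begin{bmatrix} U_p \\ Y_p \\ Y_f \\ X_{p,1} \end{bmatrix}\right).
\]
The inclusion $\subseteq$ is immediate, since $X_{p,1}$ is a submatrix of $X_p$. For $\supseteq$, I would invoke the nonsingularity of $C_2$ together with the identity $X_{p,2}=C_2^{-1}Y_p-C_2^{-1}C_1 X_{p,1}$ recalled above: if $v$ is annihilated by $U_p,Y_p,Y_f$ and $X_{p,1}$, then $X_{p,2}v=C_2^{-1}Y_p v-C_2^{-1}C_1 X_{p,1}v=0$, hence $X_p v=0$ as well. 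So the two kernels agree; denote this common subspace by $\mathcal K$.

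With $\mathcal K$ in hand, the equivalence is short. The implication \eqref{ker_inclusion}$\Rightarrow$\eqref{ker_reduced} is trivial: for $v\in\mathcal K$, $X_f v=0$ gives in particular $X_{f,1}v=0$. For the converse, I would use the twin identity $X_{f,2}=C_2^{-1}Y_f-C_2^{-1}C_1 X_{f,1}$. Taking $v\in\mathcal K$, we have $Y_f v=0$ by the definition of $\mathcal K$ and $X_{f,1}v=0$ by \eqref{ker_reduced}, so $X_{f,2}v=C_2^{-1}Y_f v-C_2^{-1}C_1 X_{f,1}v=0$. Combined with $X_{f,1}v=0$, this yields $X_f v=0$, which is exactly \eqref{ker_inclusion}.

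The structural ingredient doing all the work is the nonsingularity of $C_2$, which lets me rewrite both $X_{p,2}$ and $X_{f,2}$ as linear combinations of the retained data; the Assumption on the data is needed only upstream, to ensure that $C$ and its partition $[\,C_1\mid C_2\,]$ are well defined. I do not expect a genuine obstacle. The single point that must not be overlooked is that $Y_f$ sits inside the stacked matrix defining $\mathcal K$: it is precisely the condition $Y_f v=0$ that upgrades $X_{f,1}v=0$ to $X_{f,2}v=0$ in the backward direction, and without it the reduced condition would be strictly weaker.
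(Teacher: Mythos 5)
Your proof is correct, and it takes a route that is dual to the paper's rather than identical to it. The paper works on the ``image'' side of the kernel inclusions: it translates \eqref{ker_inclusion} into the existence of a factorization $X_f = \left[\begin{array}{c|c|c|c} T_1 & T_2 & T_3 & T_4\end{array}\right]\begin{bmatrix}U_p^\top & Y_p^\top & Y_f^\top & X_p^\top\end{bmatrix}^\top$, substitutes $X_{p,2}=C_2^{-1}Y_p-C_2^{-1}C_1X_{p,1}$ to rewrite $T_4X_p$ as a combination of $X_{p,1}$ and $Y_p$, and extracts the first block row to obtain the reduced factorization; conversely it rebuilds the full factorization from the reduced one via $X_{f,2}=C_2^{-1}Y_f-C_2^{-1}C_1X_{f,1}$. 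You instead argue pointwise on kernel vectors, first establishing that the two right-hand-side kernels coincide (a step the paper leaves implicit inside its matrix manipulations) and then invoking the same two identities on each $v$ in that common kernel. The two arguments rest on identical structural facts and are interchangeable via the standard duality between $\ker(B)\subseteq\ker(A)$ and the existence of $M$ with $A=MB$; yours is shorter and isolates cleanly why $Y_f$ must appear in the stacked matrix, while the paper's explicit factorization has the side benefit of exhibiting how the matrices of a full-order acceptor map to those of a reduced-order one and back, which ties directly into the parametrization of Proposition \ref{reduced_acceptor}. Your remark on the role of the hypotheses is also accurate: the equivalence itself uses only $Y_p=CX_p$, $Y_f=CX_f$ and the invertibility of $C_2$, with the Assumption entering upstream to identify $C$ and justify the partition.
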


\begin{proof} 
Condition \eqref{ker_inclusion} implies that 
$\exists 
 \left[\begin{array}{c|c|c|c}
 T_1 & T_2 & T_3 & T_4
 \end{array}
 \right]\in {\mathbb R}^{n \times (m+2p+n)}$ s.t. 
 \be\label{Xf}
 X_f =  \left[\begin{array}{c|c|c|c}
 T_1 & T_2 & T_3 & T_4
 \end{array}
 \right]
\begin{bmatrix}
U_p \\
Y_p \\
Y_f \\
X_p 
\end{bmatrix}.
\ee 
We partition the matrix $T_4$ conformably with the partition of the vector $x$, namely 
$T_4 = \left[\begin{array}{c|c}
 T_{41} & T_{42} 
 \end{array}
 \right]$, with $T_{41}\in {\mathbb R}^{n\times (n-p)}$ and $T_{42}\in {\mathbb R}^{n\times p}$,
and we observe that 
$ X_{p,2} = C_2^{-1}Y_p-C_2^{-1}C_1 X_{p,1} $. 
This implies that 
\begin{eqnarray*}
T_4 X_p &=& \left[\begin{array}{c|c}
 T_{41} & T_{42} 
 \end{array}
 \right]\begin{bmatrix}
 X_{p,1} \\
 C_2^{-1}Y_p-C_2^{-1}C_1 X_{p,1}
  \end{bmatrix}\\
 &=& (T_{41}-T_{42}C_2^{-1}C_1) X_{p,1} + T_{42}C_2^{-1}Y_p,
\end{eqnarray*}
yielding 
\begin{eqnarray*}
X_{f,1} \!\!\!&=&\!\!\! \begin{bmatrix}  I_{n-p} & 0\end{bmatrix}  
 \big[T_1 \ | \ T_2 + T_{42}C_2^{-1} \ | \ T_3 | \\ 
&&T_{41}-T_{42}C_2^{-1}C_1\big]
 \begin{bmatrix}
U_p \\
Y_p \\
Y_f \\
X_{p,1} 
\end{bmatrix},
\end{eqnarray*}
and hence \eqref{ker_reduced} holds.

Conversely, condition \eqref{ker_reduced} implies that 
$\exists 
 \left[\begin{array}{c|c|c|c}
 S_1 & S_2 & S_3 & S_4
 \end{array}
 \right]\in {\mathbb R}^{(n-p) \times (m+2p+n-p)}$ s.t. 
$$
 X_{f,1} =  \left[\begin{array}{c|c|c|c}
 S_1 & S_2 & S_3 & S_4
 \end{array}
 \right]
\begin{bmatrix}
U_p \\
Y_p \\
Y_f \\
X_{p,1} 
\end{bmatrix}.
$$
Moreover, we have 
\begin{eqnarray*}
X_{f,2} &=& C_2^{-1}Y_f-C_2^{-1}C_1 X_{f,1} \\
&=& C_2^{-1}Y_f-C_2^{-1}C_1 \left[\begin{array}{c|c|c|c}
 S_1 & S_2 & S_3 & S_4
 \end{array}
 \right]
\begin{bmatrix}
U_p \\
Y_p \\
Y_f \\
X_{p,1} 
\end{bmatrix}\\
&=& 
\left[ -C_2^{-1}C_1S_1 \ | \ -C_2^{-1}C_1S_2 \ | \right.  \\
 && \left. C_2^{-1}-C_2^{-1}C_1S_3 \ | \ -C_2^{-1}C_1S_4
 \right]
\begin{bmatrix}
U_p \\
Y_p \\
Y_f \\
X_{p,1} 
\end{bmatrix},
\end{eqnarray*}
which leads to 
\begin{eqnarray*}
X_f \!\!\!&=&\!\!\! \begin{bmatrix}
X_{f,1}\\
X_{f,2}
\end{bmatrix} = {\small
\left[\begin{matrix}
S_1 & S_2  \\
-C_2^{-1}C_1S_1 & -C_2^{-1}C_1S_2 
\end{matrix}\right. }\\
&& \quad \quad \quad  \quad 
{\small\left.\!\!\begin{matrix}
S_3 & S4 & 0 \\
C_2^{-1}-C_2^{-1}C_1S_3 & -C_2^{-1}C_1S_4 & 0
\end{matrix} \right]
\!\!\! \begin{bmatrix}
U_p \\
Y_p \\
Y_f \\
X_{p,1} \\
X_{p,2}
\end{bmatrix}}
\end{eqnarray*}
which implies  \eqref{ker_inclusion}.
\end{proof}

So far, we have designed only a data-driven acceptor of the form \eqref{uio} for the system in \eqref{sys.eq1}-\eqref{sys.eq2}. To make this acceptor an rUIO, we need to impose a further requirement, namely we have to guarantee that the dynamics of the estimation error is not only autonomous, but also Schur stable.

\begin{theorem} \label{final}
Given the historical data $(u_d, y_d, x_d)$, satisfying the Assumption, 
there exists a reduced-order unknown-input observer   for system $\Sigma$ of the form \eqref{uio},
designed from the historical data,  if and only if $\exists \left[\begin{array}{c|c|c|c}
 S_1 & S_2 & S_3 & S_4
 \end{array}
 \right]\in {\mathbb R}^{(n-p) \times (m+2p+n-p)}$, with $S_4$ Schur stable, such that 
 \eqref{Xf1} holds.
\end{theorem}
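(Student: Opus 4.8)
The plan is to observe that an rUIO is precisely an acceptor whose error-dynamics matrix $A_{UIO}$ is, in addition, Schur stable, and then to read this off from the data-driven characterization already obtained. Recall from the discussion preceding Lemma~\ref{acceptor} that \eqref{uio} is an rUIO if and only if \eqref{cond1}$\div$\eqref{cond5} hold, while by Lemma~\ref{acceptor} the conditions \eqref{cond2}$\div$\eqref{cond5} alone are equivalent to \eqref{uio} being an acceptor for $\Sigma$. The single remaining condition is \eqref{cond1}, i.e.\ the Schur stability of $A_{UIO}$. The crucial bridge is the identity $A_{UIO}=S_4$ furnished by \eqref{Auio} (and, reciprocally, by \eqref{S4}): because of it, \eqref{cond1} holds exactly when $S_4$ is Schur stable. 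With this remark in hand, the theorem follows from Proposition~\ref{reduced_acceptor} by merely tracking the Schur-stability requirement through the parametrization.

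For necessity I would start from an rUIO designed from the data, whose matrices therefore satisfy \eqref{cond1}$\div$\eqref{cond5}; in particular, by Lemma~\ref{acceptor}, they satisfy the acceptor conditions \eqref{cond2}$\div$\eqref{cond5}. The converse direction of Proposition~\ref{reduced_acceptor} then lets me define $S_1,S_2,S_3,S_4$ via \eqref{S1}$\div$\eqref{S4}; these satisfy \eqref{Xf1}, and since $S_4=A_{UIO}$ obeys \eqref{cond1}, the block $S_4$ is Schur stable, as claimed.

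For sufficiency I would take any $S_1,S_2,S_3,S_4$ with $S_4$ Schur stable satisfying \eqref{Xf1} and build $A_{UIO},B_{UIO}^u,B_{UIO}^y,D_{UIO}$ through \eqref{Auio}$\div$\eqref{Duio}. The direct direction of Proposition~\ref{reduced_acceptor} (equivalently, Remark~\ref{equivalence}) guarantees that these matrices satisfy \eqref{cond2}$\div$\eqref{cond5}, so the resulting system \eqref{uio} is an acceptor; moreover $A_{UIO}=S_4$ is Schur stable, so \eqref{cond1} holds too. All of \eqref{cond1}$\div$\eqref{cond5} being in force, the constructed system is an rUIO built from the historical data. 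I do not expect a genuinely hard step here: the statement is assembled entirely from the equivalences already proved, and the only point demanding care is to lean on the full-row-rank Assumption (through Proposition~\ref{reduced_acceptor}) so that the parametrization by $S_1,\dots,S_4$ is legitimate and the identification $A_{UIO}=S_4$ is unambiguous.
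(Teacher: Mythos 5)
Your proof is correct and follows essentially the same route as the paper: both decompose the rUIO property into ``acceptor'' (handled by Proposition~\ref{reduced_acceptor} together with Lemma~\ref{acceptor} and Remark~\ref{equivalence}) plus Schur stability of the error dynamics, and both close the argument via the identification $A_{UIO}=S_4$ so that condition \eqref{cond1} becomes Schur stability of the block $S_4$. Your version merely spells out the two implications separately, which the paper compresses into a single paragraph.
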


\begin{proof} As discussed in the previous section, a system described as in  \eqref{uio}
is a reduced-order unknown-input observer   for system $\Sigma$ if and only if it is an acceptor and the 
dynamics of $e_1(t)= x_1(t)-\hat x_1(t)$ is autonomous and asymptotically stable. 
By Proposition \ref{reduced_acceptor} and the subsequent Remark \ref{equivalence}, we know that 
there  exists an acceptor   for system $\Sigma$ of the form \eqref{uio},  designed from the historical data,  if and only if $\exists \left[\begin{array}{c|c|c|c}
 S_1 & S_2 & S_3 & S_4
 \end{array}
 \right]\in {\mathbb R}^{(n-p) \times (m+2p+n-p)}$, such that 
 \eqref{Xf1} holds.
Since we have shown that 
the estimation error on the first components of the state follows the autonomous dynamics $e_1(t+1) = A_{UIO}e_1(t)=S_4e_1(t)$, such autonomous dynamics is asymptotically stable if and only if  $S_4 = A_{UIO}$ is Schur stable. 
\end{proof}

Note that the general solution to  \eqref{Xf1} is given by 
{\small
\begin{eqnarray*}
 \left[\begin{array}{c|c|c|c}
 S_1 & S_2 & S_3 & S_4
 \end{array}
 \right] &=& X_{f,1}\begin{bmatrix}
U_p \\
Y_p \\
Y_f \\
X_{p,1} 
\end{bmatrix}^\dag \\
&+&
W
\left(I-\begin{bmatrix}
U_p \\
Y_p \\
Y_f \\
X_{p,1} 
\end{bmatrix}
\begin{bmatrix}
U_p \\
Y_p \\
Y_f \\
X_{p,1} 
\end{bmatrix}^\dag\right),
\end{eqnarray*}}
where $W$ is an arbitrary matrix of suitable dimensions.  So, once condition  \eqref{ker_reduced} holds,
one has to explore if there is a solution to \eqref{Xf1}, in the set of solutions parametrized  above,  with $S_4$ Schur stable.

\section{Example}\label{sec:Example}
In this section we provide a  numerical example to validate the obtained results. 

\begin{example} \label{ex1}
Consider a system $\Sigma$  of order $n=5$ described as in \eqref{sys} for the following choice of matrices: 
\begin{align*}
A  &= \left[\begin{array}{c|c}
A_{11} &  A_{12} \\
\hline
A_{21}&A_{22}
\end{array}\right] = \left[\begin{array}{cc|ccc}
0 &  0 & 0 & 0 & 1/2 \\
1 & 0 & 0 & 0 & 3/4 \\
\hline
0 & 1& 0 & 0 & -2 \\
0 & 0 & 1 & 0 & -5/4 \\
0 & 0 & 0 & 1 & 3
\end{array}\right], \\
B &= \left[\begin{array}{c}
B_1 \\
\hline
B_2
\end{array}\right]=\left[\begin{array}{cc}
0 & 1 \\
2 & 1 \\
\hline
-2 & 1 \\
0 & 0 \\
1 & 0
\end{array}\right], \  E = \left[\begin{array}{c}
E_1 \\
\hline
E_2
\end{array}\right]=\left[\begin{array}{cc}
0 & 1 \\
0 & 0 \\
\hline
0  & 0\\
2 & 1\\
1 & 0
\end{array}\right], \\ 
C &= \left[\begin{array}{c|c}
C_1 &  C_2 
\end{array}\right] = \left[\begin{array}{cc|ccc}
0 &  1 & -1 & 2 & -1\\
0 & 0 & 2 & 0 & -1 \\
3 & 0 & 2 & -1 & 1
\end{array}\right].
\end{align*}
Historical  (both known and unknown) input data have been randomly generated, uniformly in the interval $(-5,5)$ for the known input $u(t)$, and in the interval $(-2,2)$ for the disturbance $d(t)$. The time-interval of the offline experiment has been set to $T=11$. We have collected the data corresponding to the input/output/state trajectories and then   checked that all the assumptions are satisfied and that the kernels inclusion holds. Clearly, from $Y_p$ and $X_p$ we have recovered the exact expression of $C$.
We have then set as matrices of the rUIO in \eqref{uio} the ones corresponding to the following particular solution of equation \eqref{Xf1}:
{\small
$$ \label{gen_sol}
 \left[\begin{array}{c|c|c|c}
\!\!\!B_{UIO}^u \!\!&\!\! B_{UIO}^y-A_{UIO}D_{UIO} \!\!&\!\! D_{UIO} \!\!&\!\! A_{UIO}\!\!\!
 \end{array}
 \right] = X_{f,1}\begin{bmatrix}
U_p \\
Y_p \\
Y_f \\
X_{p,1} 
\end{bmatrix}^\dag \!\!\!,$$}
namely
\begin{align*}
A_{UIO} &= \begin{bmatrix}
0.1580 & -0.4135 \\
0.3763	& 0.0029
\end{bmatrix}, \\ 
B_{UIO}^u &= \begin{bmatrix}
0.6797 & -0.8599 \\
1.8089 & 1.0409
\end{bmatrix}, \\
B_{UIO}^y &= \begin{bmatrix}
-0.1618 & 0.0889 & -0.0382 \\
0.1104 & -0.1670 & 0.3555
\end{bmatrix}, \\
D_{UIO} &= \begin{bmatrix}
0.1200 & -0.0201 & 0.3800 \\
-0.0136 & -0.0546 & 0.0136 
\end{bmatrix}. 
\end{align*}
It is easy to verify that the matrix $A_{UIO}$ is Schur stable. Finally, we have  tested the performance of the designed rUIO corresponding to the (known) input   $u(t) = \begin{bmatrix} 0.8\cos(0.2t+2) &
3t\end{bmatrix}^\top, \ t\in\mathbb{Z}_+$, and a random disturbance $d(t)$ whose first and second components take values uniformly in the interval $(-5,5)$ and $(-2,2)$, respectively.   Figures \ref{fig1}  illustrates  the state estimation error, that asymptotically converges to zero, as expected.   

\begin{figure} 
\centering
\includegraphics[width = 0.5\textwidth]{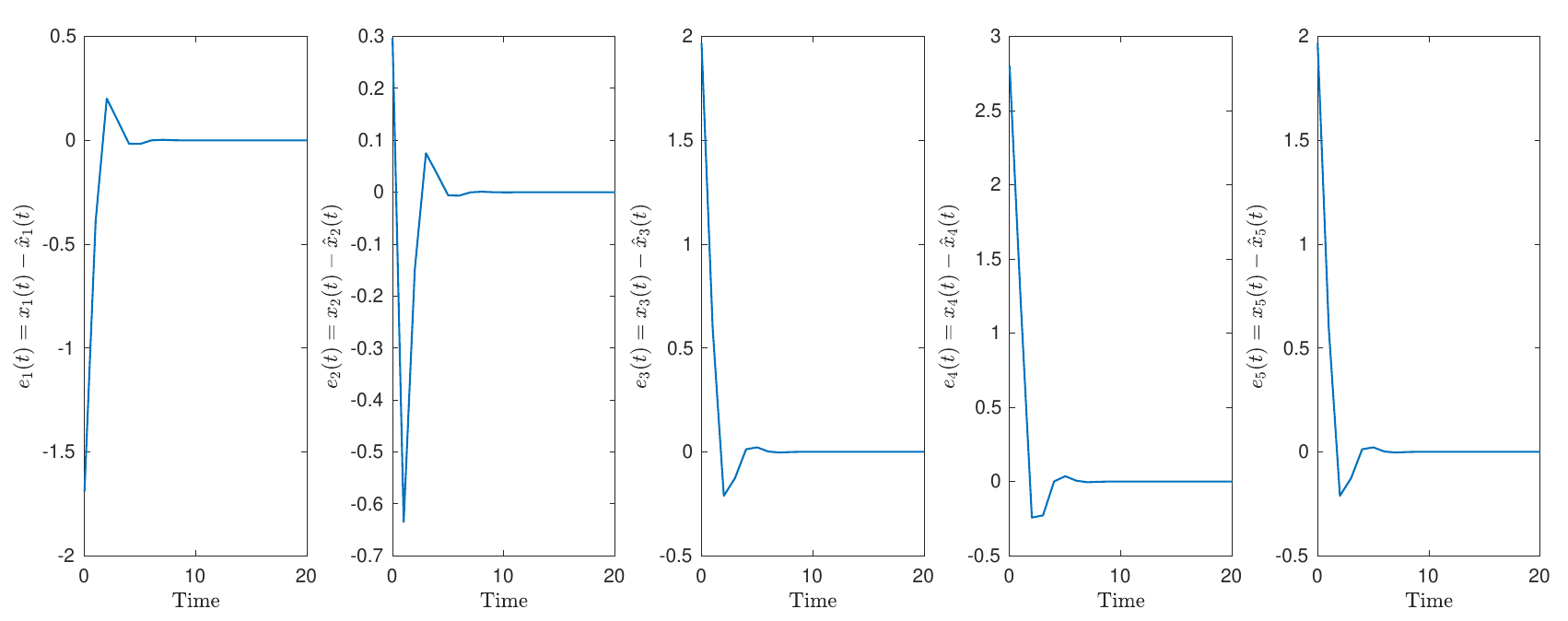}
\caption{Dynamics of the state estimation error} 
\label{fig1}
\end{figure} 

\end{example}

\section{Conclusions and future work}\label{sec:Conc}

In this paper we have proposed a data-driven technique to design a reduced-order UIO based on some collected data.
To achieve this goal we have revised the model-based approach to the problem solution, by adopting a set-up that does not introduce overly-simplified assumptions on the output matrix $C$ and at the same time does not require general changes of basis, that would be difficult to extend to the data-based approach.
This set-up, on the contrary, has allowed us to leverage the results recently obtained in 
\cite{TAC-UIO,Ferrari-Trecate} for full-order observers, and hence to obtain necessary and sufficient conditions for the problem solution. The final result is a more efficient algorithm for the state estimation, that can be obtained exactly under the same assumptions under which a full-order UIO could be designed.

The system model we have adopted assumes that the output depends on the state only.
The more general case where  $y(t)=Cx(t) + Du(t)$ represents a quite easy extension, that however would increase the complexity of the formulas, and hence we have preferred to avoid it.
The case where disturbance affects the output measurements, on the other hand, is a non-trivial extension and the subject of future investigations.

The concluding example, that we chose of small size to be able to provide its describing matrices within the space limits, clearly illustrates the excellent performance of the rUIO.

\bibliographystyle{plain}

\bibliography{Refer172}

\end{document}